\newtheorem{Theorem}{Theorem}
\newtheorem{Lemma}[Theorem]{Lemma}
\begin{document}

\title[Consensus in the Hegselmann-Krause model]{How long does it take to consensus in the Hegselmann-Krause model?}

\author{Sascha Kurz}
\address{Sascha Kurz, Mathematisches Institut, Universit\"at Bayreuth, 95440 Bayreuth, Germany. E-mail: sascha.kurz@uni-bayreuth.de, 
     Phone: +49\,921\,55\,7353, Fax: +49\,921\,55\,7353, Homepage: http://www.wm.uni-bayreuth.de/index.php?id=sk}
\begin{abstract}
  Hegselmann and Krause introduced a discrete-time model of opinion dynamics with agents having limit confidence. It is well known
  that the dynamics reaches a stable state in a polynomial number of time steps. However, the gap between the known lower and 
  upper bounds for the worst case is still immense. In this paper exact values for the maximum time, needed to reach consensus or 
  to discover that consensus is impossible, are determined 
  using an integer linear programming approach.
\end{abstract}
\maketitle                   

\section{Introduction and problem formulation}
\label{sec_introduction}
A Hegselmann-Krause system on the real line, HK system for short, is defined as follows. There are $n$ agents with real-valued initial 
opinions $x_1(0)\le x_2(0)\le\dots\le x_n(0)$. For all $t\in\mathbb{N}$ the opinion of agent $i$ at time $t+1$ is given by 
$x_i(t+1)=\left(\sum_{j\in\mathcal{N}_i(t)} x_j(t)\right)/\left|\mathcal{N}_i(t)\right|$, where 
$\mathcal{N}_i(t)=\left\{j\,:\,\left|x_i(t)-x_j(t)\right|\le 1\right\}$ (cf.~\cite{hegselmann2002opinion}). Obviously, the ordering of the
agents, i.e., $x_1(t)\le x_2(t)\le\dots\le x_n(t)$, is preserved over time. We say that a HK system has
converged at time $T$ if $x_i(t+1)=x_i(t)$ holds for all agents and all time steps $t\ge T$. For the convergence time,
i.e., the smallest time $T$ such that the system has converged at $T$, an upper bound of $O(n^3)$ was proven 
\cite{bhattacharyya2013convergence,mohajer2012convergence}. It is conjectured that the worst-case scenario is given by the initial
positions $x_i(0)=i$ -- \emph{equidistant configuration} for later reference -- and needs roughly $\frac{4n}{5}$ time steps 
\cite{wedin2014hegselmann}.

\begin{figure}[htp]
  \centering
  \includegraphics[width=0.2\linewidth]{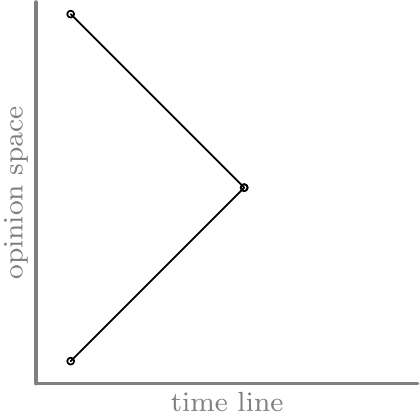}%
  \includegraphics[width=0.2\linewidth]{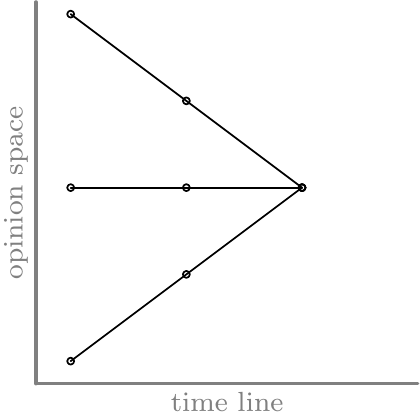}%
  \includegraphics[width=0.2\linewidth]{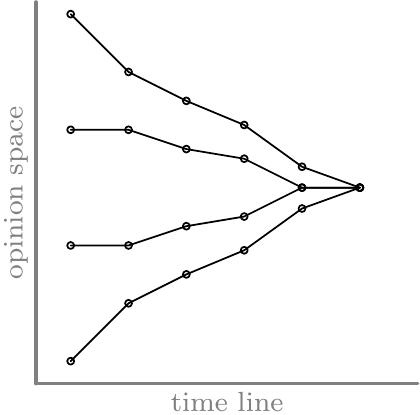}%
  \includegraphics[width=0.2\linewidth]{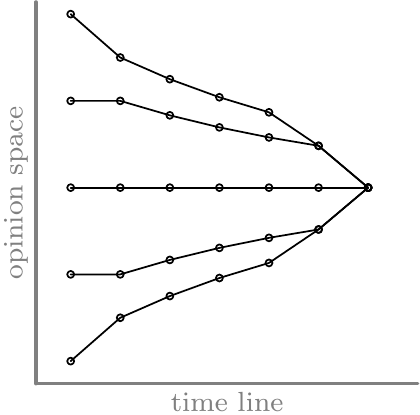}%
  \includegraphics[width=0.2\linewidth]{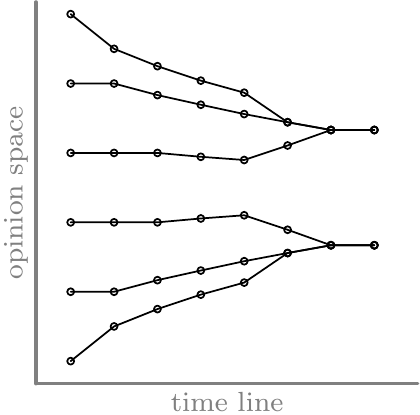}%
  \caption{Trajectories for the equidistant configuration with $n=2,\dots,6$ agents}
  \label{fig:trajectories_equidistant}
\end{figure}

\noindent
In an equidistant configuration with at most $5$ agents the final state consists of a single opinion, see Figure~\ref{fig:trajectories_equidistant}. 
There is also the possibility that different clusters arise, as e.g.\ for $n=6$. We define the weight of $x\in\mathbb{R}$ at time $t$ as 
$w_t(x)=\left\{i\,:\,x_i(t)=x\right\}$ and the the weight of agent $i$ at time $t$ as $w_t(x_i(t))$. With this, we say that a HK system has reached 
consensus at time $t$ if the weight of agent $1$ equals $n$. Whenever we have $x_{i+1}(t)-x_i(t)>1$, we have $x_{i+1}(t')-x_i(t')>1$ for all 
$t'\ge t$, i.e., no consensus is possible. By $f(\mathcal{C})$ we denote the earliest time $t$ such that a given HK system $\mathcal{C}$ has 
reached consensus or we have $x_{i+1}(t)-x_i(t)>1$ for at least one index $i$. With this let $f(n)$ denote the maximum of $f(\mathcal{C})$ over 
all configurations consisting of $n$ agents. Clearly $f(\mathcal{C})$ is upper bounded by the convergence time, so that $f(n)\in O(n^3)$.


\section{An analytical lower bound}
\label{sec_analytical}
Let $k\ge 4$ be an integer, $j_1=1$, $j_2=k+1$, $j_3=k+2$, $j_4=k+3$, and $n=2k+2$. We consider the HK system $\mathcal{C}$ with initial opinions 
$x_i(0)=-\frac{1}{k}$ for all $1\le i\le k$, $x_{j_2}(0)=0$, $x_{j_3}(0)=1$, and $x_i(0)=1+\frac{1}{k}$ for all $j_4\le i\le n$, i.e., 
agents $j_1$ and $j_4$ have weight $k$ and are rather close to agents $j_2$ and $j_3$, respectively. From the following lemma 
we conclude $f(\mathcal{C})\in\Omega(n)$.
\begin{Lemma}
  Let $a_t=t+1$, $b_t=\frac{(t+1)(t+2)}{2}$, and $c_t=t$. For all $0\le t\le \frac{k}{3}$ we have 
  $$
    -\frac{b_t}{k^3}\le x_{j_1}(t)+\frac{1}{k}-\frac{a_t}{k^2}\le 0,\quad
    0\le x_{j_2}(t)\le \frac{c_t}{k^2},\quad 1-\frac{c_t}{k^2}\le x_{j_3}(t)\le 1,\quad
    0\le x_{j_4}(t)-1+\frac{1}{k}-\frac{a_t}{k^2}\le \frac{b_t}{k^3}.
  $$
\end{Lemma}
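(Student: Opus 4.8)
The plan is to argue by induction on $t$, carrying all four estimates simultaneously as one joint hypothesis. The first move is to exploit the reflection symmetry of the initial data about $\frac{1}{2}$: the opinions $x_i(0)$ and $x_{n+1-i}(0)$ are mirror images across $\frac{1}{2}$, and the Hegselmann--Krause update commutes with the affine reflection $x\mapsto 1-x$, so the whole trajectory stays symmetric and $x_{j_3}(t)=1-x_{j_2}(t)$, $x_{j_4}(t)=1-x_{j_1}(t)$ for every $t$. This collapses the third and fourth inequalities onto the second and first, so it suffices to control $x_{j_1}$ and $x_{j_2}$.

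The second ingredient is that the adjacency pattern of the four groups is frozen throughout the window $0\le t\le \frac{k}{3}$. I would first note that all $k$ agents of the left block share a common position and hence a common neighbourhood at every step, so they move as one; write $x_{j_1}(t)$ for their value. Assuming the four estimates at time $t$, I would read off from the bounds that the left block and $j_2$ remain within distance $1$, that $j_2$ and $j_3$ remain within distance $1$, and that the left block stays at distance strictly greater than $1$ from $j_3$ (hence also from the right block), and symmetrically on the right. The last point is decisive: $x_{j_3}(t)-x_{j_1}(t)\ge 1+\frac{1}{k}-\frac{a_t+c_t}{k^2}$, which exceeds $1$ exactly while $a_t+c_t<k$, and this is what forces the ceiling $t\le\frac{k}{3}$. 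With the adjacency fixed, the updates become the two coupled recurrences $x_{j_1}(t+1)=\frac{k\,x_{j_1}(t)+x_{j_2}(t)}{k+1}$ and, after substituting $x_{j_3}=1-x_{j_2}$, $x_{j_2}(t+1)=\frac{k\,x_{j_1}(t)+1}{k+2}$.

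Given these recurrences, the inductive step is substitution followed by elementary estimates. For the upper bound on $x_{j_1}(t+1)$ I would insert $x_{j_1}(t)\le -\frac{1}{k}+\frac{a_t}{k^2}$ and $x_{j_2}(t)\le\frac{c_t}{k^2}$; for the lower bound, $x_{j_1}(t)\ge -\frac{1}{k}+\frac{a_t}{k^2}-\frac{b_t}{k^3}$ and $x_{j_2}(t)\ge 0$; and analogously for $x_{j_2}(t+1)$ using both bounds on $x_{j_1}(t)$. After clearing the factor $k+1$ or $k+2$, each inequality reduces to a polynomial comparison that closes because $a_{t+1}=a_t+1$, $c_{t+1}=c_t+1$, and $b_{t+1}-b_t=t+2$; the growth of $b_t$ is precisely what absorbs the first-order error generated by averaging against $k+1$ rather than $k$. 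The base case $t=0$ is the initial configuration, which I would check directly.

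The genuine obstacle is not any single estimate but keeping the whole coupled system consistent on one common horizon. The lower bounds are tight for small $t$ --- both $x_{j_2}(0)$ and $x_{j_2}(1)$ sit exactly on the boundary value $0$ --- so the recurrence must be arranged so that the lower bound on $x_{j_1}$ feeds the lower bound on $x_{j_2}$ without the latter turning negative, while at the same time the adjacency estimate must survive. Verifying that the single window $0\le t\le\frac{k}{3}$ makes all of these demands compatible, and in particular that $a_t+c_t<k$ persists exactly as far as the recurrences remain valid, is where I would concentrate the effort.
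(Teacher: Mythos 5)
Your proposal is correct and follows essentially the same route as the paper's proof: the same reflection symmetry $x\mapsto 1-x$ reducing everything to $x_{j_1},x_{j_2}$, the same frozen neighbourhood structure justified by the claimed bounds themselves (with $x_{j_3}(t)-x_{j_1}(t)\ge 1+\frac{1}{k}-\frac{a_t+c_t}{k^2}>1$), the same two recurrences $x_{j_1}(t+1)=\frac{k\,x_{j_1}(t)+x_{j_2}(t)}{k+1}$ and $x_{j_2}(t+1)=\frac{k\,x_{j_1}(t)+1}{k+2}$, and the same induction in which the growth $b_{t+1}-b_t=t+2$ absorbs the averaging error (indeed $a_t+b_t+1=b_{t+1}$ exactly). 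The paper merely carries out the polynomial comparisons you sketch, using the bracketing $\frac{1}{k}-\frac{1}{k^2}\le\frac{1}{k+1}\le\frac{1}{k}-\frac{1}{k^2}+\frac{1}{k^3}$ to clear the denominators.
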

\begin{proof}
  If true, we have $x_{j_2}(t)-x_{j_1}(t)\le \frac{c_t}{k^2}+\frac{1}{k}\le 1$, $x_{j_3}(t)-x_{j_2}(t)\le 1$, and 
  $x_{j_3}(t)-x_{j_1}(t)\ge 1-\frac{c_t}{k^2}+\frac{1}{k}-\frac{a_t}{k^2}>1$ for all $0\le t\le \frac{k}{3}$, i.e., 
  $\mathcal{N}_{j_1}=\{1,\dots,j_1,j_2\}$, $\mathcal{N}_{j_2}=\{1,\dots,j_1,j_2,j_3\}$, $\mathcal{N}_{j_3}=\{j_2,j_3,j_4,\dots,n\}$, and
  $\mathcal{N}_{j_4}=\{j_3,j_4,\dots,n\}$ due to symmetry. Next we proof the assertion by induction on $t\le\frac{k}{3}$, where we use 
  $\frac{1}{k}-\frac{1}{k^2}\le \frac{1}{k+1}\le \frac{1}{k}-\frac{1}{k^2}+\frac{1}{k^3}$ and $x_2(t)+x_3(t)=1$ (symmetry again). 
  We can check that for $t=0$ the asserted inequalities are satisfied. For $t\ge 0$ we have 
  \begin{eqnarray*}
    x_{j_1}(t+1) &=&\frac{k\cdot x_{j_1}(t)+x_{j_2}(t)}{k+1}
    \le \frac{k\cdot\left(-\frac{1}{k}+\frac{a_t}{k^2}\right)+\frac{c_t}{k^2}}{k+1}
    \le \left(\frac{1}{k}-\frac{1}{k^2}\right)\cdot\left(-1+\frac{a_t}{k}+\frac{c_t}{k^2}\right)
    \le -\frac{1}{k}+\frac{a_{t}+1}{k^2},\\
    x_{j_1}(t+1) &\ge&
    \frac{k\cdot\left(-\frac{1}{k}+\frac{a_t}{k^2}-\frac{b_t}{k^3}\right)+0}{k+1}
    \ge\left(\frac{1}{k}-\frac{1}{k^2}+\frac{1}{k^3}\right)\cdot\left(-1+\frac{a_t}{k}-\frac{b_t}{k^2}\right)
    \ge -\frac{1}{k}+\frac{a_t+1}{k^2}-\frac{a_t+b_t+1}{k^3},\\
    x_{j_2}(t+1) &=& \frac{k\cdot x_{j_1}(t)+x_{j_2}(t)+x_{j_3}(t)}{k+2}
    \ge \frac{k\cdot\left(-\frac{1}{k}+\frac{a_t}{k^2}-\frac{b_t}{k^3}\right)+1}{k+2}
    = \frac{a_t-\frac{b_t}{k}}{k(k+2)}\ge 0,\text{ and}\\
    x_{j_2}(t+1) &\le& \frac{k\cdot\left(-\frac{1}{k}+\frac{a_t}{k^2}\right)+1}{k+2}= \frac{a_t}{k(k+2)}\le \frac{a_t}{k^2},
  \end{eqnarray*}
  where we always assume $t\le\frac{k}{3}$ and $k\ge 4$. The remaining inequalities follow from the symmetry of the configuration.
\end{proof} 

\section{An integer linear programming model}
\label{sec_ilp_model}

By considering $\mathcal{N}_i(t)\backslash\{i\}$ as the neighbors of agent~$i$, we obtain the \emph{influence graph} $\mathcal{G}(t)$, 
which is clearly a unit interval graph. Let $\mathcal{I}_n^c$ denote the set of connected unit interval graphs with vertex set 
$V=\{1,\dots,n\}$, which admit a representation satisfying $x_i\le x_j$ for all $i\le j$, and $K_n$ denote the complete 
graph on $n$ vertices. Given $n,T\in\mathbb{N}$ and $\varepsilon\in\mathbb{R}$ consider the BLP:\\[-5mm]
\begin{eqnarray*}
  \min \sum_{I=(V,E)\in\mathcal{I}_n^c} |E|\cdot u_I^T &\!\!\!& \text{subject to}\\
  x_j^t-x_i^t\le 1+\varepsilon + (1-u_I^t)\cdot n &\!\!\!& \forall 0\le t\le T,I=(V,E)\in \mathcal{I}_n^c, 1\le i<j\le n \text{ with } \{i,j\}\in E\\
  x_j^t-x_i^t\ge 1-\varepsilon -(1-u_I^t)\cdot (1-\varepsilon) &\!\!\!& \forall 0\le t\le T,I=(V,E)\in \mathcal{I}_n^c, 1\le i<j\le n 
  \text{ with } \{i,j\}\notin E\\
  \sum_{I\in\mathcal{I}_n^c}\!\! u_I^t=1\quad \forall 0\le t\le T\quad u_{K_n}^t=0 \quad\forall 0\le t<T
  &\!\!\!&
  x_i^t=\!\!\!\!\!\!\sum_{I=(V,E)\in\mathcal{I}_n^c}\!\!\!\!\!\!\frac{x_{i}^{t-1}+\sum_{\{i,j\}\in E}z_{j,I}^{t-1}}{1+\sum_{\{i,j\}\in E}1} 
  \quad\forall 1\le t\le T,i\in V\\
  z_{i,I}^t\le n\cdot u_{I}^t\quad\forall 0\le t<T,\,I\in\mathcal{I}_n^c\,i\in V&&
  z_{i,I}^t \ge x_i^t-n\cdot(1-u_I^t)\quad \forall 0\le t<T, I\in \mathcal{I}_n^c,\,i\in V\\
  z_{i,I}^t\le x_i^t\quad\forall 0\le t<T,\,I\in\mathcal{I}_n^c,\,i\in V&&
  z_{i,I}^t\in[0,n]\quad\forall 0\le t< T,\, I\in\mathcal{I}_n^c,\,i\in V\\
  u_{I}^t\in\{0,1\}\quad\forall 0\le t\le T,I\in\mathcal{I}_n^c &\!\!\!& x_i^t\in[0,n]\quad\forall 0\le t\le T,\,i\in V
\end{eqnarray*}
If it has a solution for $\varepsilon<0$ and $T\ge 1$, then we have $f(n)\ge T+1$. If it has no solution for $\varepsilon\ge 0$, then we 
have $f(n)\le T$. By choosing $\varepsilon$ apart from zero we can partially prevent from wrong results, caused by 
numerical inaccuracies (cf.~\cite{optimal_control}).

\section{Conclusion and open problems}
\label{sec_conclusion}
We have defined the maximum time $f(n)$ to a consensus in a HK system of $n$ agents. Our analytical example for the proof of $f(n)\in\Omega(n)$ 
also shows that it might be hard to decrease the $O(n^3)$ bound for the maximum convergence time, since the known approaches rely on the fact 
that the leftmost or the rightmost agent moves by at least some $\delta$ in a finite amount of time steps. In our example we have 
$\delta\in O(\frac{1}{n^2})$ for $\Omega(n)$ rounds while the distance between two connected agents can be in $\Omega(n)$. Using an 
integer linear programming model we have determined $f(n)=0,1,2,5,7,9,\ge\!\!12$ for $1\le n\le 7$. It turned out that for at least 
$n\in\{5,6,\dots,14\}$ the equidistant configuration does not lead to the maximum convergence time -- disproving a conjecture of 
\cite{wedin2014hegselmann} \footnote{Indeed, for $n\ge2$ the convergence time is given by $1+5\left\lfloor\frac{n+2}{6}\right\rfloor +
\frac{1}{3}\left(\sqrt{3}\sin\left(\frac{2\pi(n-1)}{3}\right)-\cos\left(\frac{\pi(n-1)}{3}\right)-(-1)^n\right)$, which tends to $\frac{5n}{6}$.}.
Nevertheless we conjecture the maximum convergence time, and so also $f(n)$, to be linearly bounded. Since 
$\left|\mathcal{I}_n^c\right|={{2n-2}\choose{n-1}}/n\sim \frac{4^{n-1}}{\sqrt{\pi n}\cdot n}$, a column-generation approach might by 
very beneficial.

The problem becomes even more challenging if we allow the presence of external controls cf.~\cite{chazelle2011total,optimal_control,kurz2011hegselmann}.



\bibliographystyle{amsplain}

\end{document}